\documentclass[a4paper]{amsart}

\newtheorem{theorem}{Theorem}[section]
\newtheorem{lemma}[theorem]{Lemma}
\newtheorem{corollary}[theorem]{Corollary}

\theoremstyle{remark}
\newtheorem{remark}[theorem]{Remark}

\numberwithin{equation}{section}


\newcommand{\indic}[1]{1_{{#1}}}  
\DeclareMathOperator*{\EE}{\mathbb{E}} 
\newcommand*{\PP}{\mathbb{P}} 
\newcommand*{\RR}{\mathbb{R}} 
\newcommand*{\NN}{\mathbb{N}} 
\newcommand*{\eps}{\varepsilon} 
\DeclareMathOperator*{\sgn}{sgn} 

\newcommand*{\IC}[1][\beta]{\text{IC}(#1)} 
\newcommand*{\infconv}{\square} 
\newcommand*{\calL}{\mathcal{L}} 

\newcommand*{\krok}[1]{\emph{#1}} 


\begin{document}

\title[Convex infimum convolution inequality]{On the convex infimum convolution inequality with optimal cost  function}

\author[M. Strzelecka]{Marta Strzelecka}
\address{Institute of Mathematics, University of Warsaw, Banacha 2, 02--097 Warsaw, Poland.}
\email{martast@mimuw.edu.pl}

\author[M. Strzelecki]{Micha{\l} Strzelecki}
\address{Institute of Mathematics, University of Warsaw, Banacha 2, 02--097 Warsaw, Poland.}
\email{michalst@mimuw.edu.pl}

\author[T. Tkocz]{Tomasz Tkocz}
\address{Mathematics Department, Princeton University, Fine Hall, 
Princeton, NJ 08544-1000
USA.}
\email{ttkocz@princeton.edu}

\thanks{Research partially
supported by the National Science Centre, Poland, grants no. 2015/19/N/ST1/02661 (M. Strzelecka) and  2015/19/N/ST1/00891 (M. Strzelecki) as well as the Simons Foundation (T. Tkocz)}

\subjclass[2010]{Primary: 60E15. Secondary: 26A51, 26B25.}

\date{February 23, 2017}

\keywords{Infimum convolution, log-concave tails, convex functions, weak and strong moments}

\begin{abstract}
We show that every symmetric random variable with log-concave tails satisfies the convex  infimum convolution inequality with an optimal cost function (up to scaling). As a result, we obtain nearly optimal comparison of weak and strong moments for symmetric random vectors with independent coordinates with log-concave tails.
\end{abstract}

\maketitle


\section{Introduction}

Functional inequalities such as the Poincar\'e, log-Sobolev, or Marton-Talagrand inequality to name a few, play a crucial role in studying concentration of measure, an important cornerstone of the local theory of Banach spaces.  
In this paper we focus on another example of such inequalities, the infimum convolution inequality, introduced by Maurey in \cite{MR1097258}.

Let $X$ be a random vector with values in $\RR^n$ and let $\varphi:\RR^n\to[0,\infty]$ be a~measurable function. We say that the pair $(X, \varphi)$ satisfies the \emph{infimum convolution inequality} (ICI  for short) if for every bounded measurable  function $f:\RR^n\to\RR$,
\begin{equation}\label{eq:def-IC}
\EE e^{f\infconv \varphi (X)}\EE e^{-f(X)}\leq 1,
\end{equation}
where $f \infconv \varphi$ denotes the infimum convolution of $f$ and $\varphi$ defined as $f \infconv \varphi(x) = \inf\{f(y) + \varphi(x-y) : y\in\RR^n\}$ for $x\in\RR^n$. The function $\varphi$ is called a \emph{cost function} and $f$ is called a \emph{test function}. We also say that the pair $(X, \varphi)$ satisfies the \emph{convex  infimum convolution inequality} if~\eqref{eq:def-IC} holds for every convex function $f:\RR^n\to\RR$ bounded from below.

Maurey showed that Gaussian and exponential random variables satisfy the ICI with a quadratic and quadratic-linear cost function  respectively. Thanks to the tensorisation property of the ICI, he recovered the Gaussian concentration inequality as well as the so-called Talagrand two-level concentration inequality for the exponential product measure. Moreover, Maurey proved that bounded random variables satisfy the convex ICI with a quadratic cost function  (see also Lemma 3.2 in \cite{ss17} for an improvement). 

Later on, Maurey's idea was developed further by Lata\l a and Wojtaszczyk who studied comprehensively the ICI in \cite{MR2449135}. By testing with linear functions, they observed that the optimal cost function is given by the Legendre transform of the cumulant-generating function (here optimal means largest possible, up to a~scaling constant, because  the larger the cost function is, the better \eqref{eq:def-IC} gets). They introduced the notion of optimal infimum convolution inequalities, established them for log-concave product measures and uniform measures on $\ell_p$-balls, and put forward important, challenging and far-reaching conjectures (see also \cite{Latala-IMAbookfinal}).

The recent works \cite{grst14} and \cite{grsst15} enable to view the ICI from a different perspective. In \cite{grst14} the authors introduce weak transport-entropy inequalities and establish their dual formulations.
The dual formulations are exactly the convex ICIs. In \cite{grsst15} the authors investigate extensively the weak transport cost inequalities on the real line, obtaining a characterisation for arbitrary cost functions which are convex and quadratic near zero, thus providing a tool for studying the convex ICI. Around the same time, the convex ICI for the quadratic-linear cost function was fully understood in \cite{fmnw15}.

In this paper we go along Lata\l a and Wojtaszczyk's line of research and study the optimal convex ICI. Using the aforementioned novel tools from \cite{grsst15}, we show that product measures with symmetric marginals having log-concave tails satisfy the optimal convex ICI, which complements Lata\l a and Wojtaszczyk's result about log-concave product measures. This has applications to concentration  and moment comparison of any norm of such vectors in the spirit of celebrated Paouris' inequality (see \cite{MR2276533} and \cite{MR3150710}) and addresses some questions posed lately in \cite{lat-strz-2}. 
We also offer an example showing that the assumption of log-concave tails cannot be weakened substantially.

\section{Main results}

For a random vector $X$ in $\RR^n$ we define
\begin{equation*}
\Lambda^*_X(x) := \mathcal{L} \Lambda_X (x) := \sup_{y\in\RR^n} \{\langle x,y\rangle - \ln \EE  e^{\langle y,X\rangle}\},
\end{equation*}
which is the Legendre transform of  the cumulant-generating function 
\[
\Lambda_X(x) :=\ln \EE e^{\langle x, X\rangle},\qquad x\in\RR^n.\]

If $X$ is symmetric and the pair $(X, \varphi)$ satisfies the ICI, then $\varphi(x) \leq \Lambda^*_X(x)$ for every $x\in\RR^n$ (see Remark~2.12 in~\cite{MR2449135}). In other words, $\Lambda^*_X$ is the optimal cost function $\varphi$ for which the ICI can hold. Since this conclusion is obtained by testing~\eqref{eq:def-IC} with linear functions, the same holds for the convex ICI. Following \cite{MR2449135} we shall say that $X$ satisfies (convex) $\IC$ if the pair $(X, \Lambda_X^*(\cdot/\beta))$ satisfies the (convex) ICI.

We are ready to present our first main result.

\begin{theorem}\label{thm:main}
Let $X$ be a symmetric random variable with log-concave tails, i.e. such that the function
\begin{equation*}
t\mapsto N(t):= - \ln \PP (|X|\ge t), \quad  t\geq 0,
\end{equation*}
is convex. 
Then there exists a universal constant $\beta\leq 1680e$ such that $X$ satisfies convex $\IC$.
\end{theorem}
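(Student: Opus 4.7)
The overall strategy is to reduce the problem to dimension one and then apply the characterization of the convex ICI on the real line developed in \cite{grsst15}. Although Theorem~\ref{thm:main} is stated for a single random variable, its usefulness (and the intended applications alluded to in the introduction) comes via product measures: the convex ICI tensorises, and for independent coordinates one has the identity $\Lambda_X^*(x)=\sum_i \Lambda_{X_i}^*(x_i)$, so it suffices to prove convex $\IC[\beta]$ with an absolute $\beta$ for a single symmetric $X$ on $\RR$ with log-concave tails.

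My next step would be to reformulate the one-dimensional statement in a form to which the tools of \cite{grsst15} apply. By the duality from \cite{grst14}, convex $\IC[\beta]$ for $X$ is equivalent to a weak transport-entropy inequality for $\mu=\calL(X)$ with cost $\Lambda_X^*/\beta$. Because $X$ is symmetric and has log-concave tails, $\Lambda_X^*$ is convex, even, and quadratic near zero, which is precisely the regime covered by the real-line characterization of \cite{grsst15}. The characterization converts the transport-entropy inequality into an explicit Hardy/Muckenhoupt-type condition on $\mu$ involving integrals of the chosen cost against $d\mu$.

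Because $\Lambda_X^*$ is not expressed in terms of $N$, I would introduce a surrogate cost $\alpha$ built directly from $N$: a convex, even function on $\RR$ that is quadratic near zero (with curvature of the order $1/\EE X^2$) and equal to $N(|x|)$ up to an absolute factor for large $|x|$. The key auxiliary step is then a two-sided comparison $\Lambda_X^* \asymp \alpha$ with universal constants, obtained by carefully estimating $\Lambda_X(s)=\ln\EE e^{sX}$ via a body/tail split of $X$ and by exploiting symmetry together with log-concavity of $N$. With $\alpha$ in place of $\Lambda_X^*$ (up to a change in the constant $\beta$), the Hardy/Muckenhoupt-type condition from \cite{grsst15} reduces to an elementary inequality for the convex function $N$: since $N'$ is nondecreasing, tail integrals $\int_t^\infty e^{-N(s)}\,ds$ are bounded by $e^{-N(t)}/N'(t)$, and this is enough, together with a routine treatment of the quadratic region, to verify the condition.

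The hardest part is the two-sided comparison $\Lambda_X^*\asymp\alpha$ with explicit universal constants. The upper bound $\Lambda_X^*\leq c\,\alpha$ follows from standard moment generating function estimates under log-concave tails. The reverse direction requires locating, for each $x$, a near-optimal $y$ in the Legendre supremum $\sup_y\{xy-\Lambda_X(y)\}$, treating the sub-Gaussian regime near zero and the tail regime separately, and ensuring the transition between the two is handled without losing the absolute character of the constants. Propagating the constants through the tensorisation, the duality with weak transport, the application of \cite{grsst15}, and the comparison $\Lambda_X^*\asymp \alpha$ is what ultimately pins down the numerical value $\beta\leq 1680 e$.
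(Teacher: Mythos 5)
Your high-level plan---reduce to one dimension, fine-tune the optimal cost so that it is exactly quadratic near the origin, and invoke the real-line characterization of Gozlan--Roberto--Samson--Shu--Tetali from~\cite{grsst15}---is exactly the plan the paper follows. But your proposed implementation diverges in a way that both adds unnecessary work and mis\-locates the real difficulty.

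You propose to build a surrogate cost $\alpha$ directly from $N$ and to establish a \emph{two-sided} comparison $\Lambda_X^*\asymp\alpha$, flagging the lower bound $\Lambda_X^*\gtrsim\alpha$ (``locating a near-optimal $y$'' in the Legendre supremum) as the hardest part. That direction is in fact not needed. To pass from ``$(X,\alpha(\cdot/\beta'))$ satisfies the convex ICI'' to ``$(X,\Lambda_X^*(\cdot/\beta))$ satisfies the convex ICI'' one only needs $\alpha$ to \emph{dominate} $\Lambda_X^*$ up to scaling, i.e.\ the upper bound $\Lambda_X^*\lesssim\alpha$, which is the easy direction (Chernoff gives $\Lambda_X^*(t)\le N(t)+\ln 2$, and the $\cosh$ bound handles the quadratic region). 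The paper sidesteps even the need for a surrogate by taking the pointwise maximum
\[
\varphi(x)=\bigl(x^2\indic{\{|x|<1\}}+(2|x|-1)\indic{\{|x|\ge1\}}\bigr)\lor\Lambda_X^*\bigl(x/(2\beta_1)\bigr),
\]
which is tautologically $\ge\Lambda_X^*(\cdot/(2\beta_1))$ and, via Lemma~\ref{lem:x^2}, equals $x^2$ for $|x|\le1$; Chernoff then enters only once, in the verification of the~\cite{grsst15} condition for large increments. This is cleaner than your surrogate-plus-comparison route, though your route could be made to work with only the easy one-sided bound.

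There is a second, more substantial gap. You describe the final verification as ``elementary,'' reducing to the bound $\int_t^\infty e^{-N(s)}\,ds\lesssim e^{-N(t)}/N'(t)$ via a Hardy/Muckenhoupt-type condition. The condition actually supplied by~\cite[Theorem~1.1]{grsst15} is an \emph{increment} condition on the monotone rearrangement $U=F^{-1}\circ F_\nu$ against the exponential measure $\nu$, namely $|U(x)-U(y)|\le b^{-1}\varphi^{-1}(1+|x-y|)$, not a tail-integral condition. Translating this into statements about $N$ and then verifying it is where most of the paper's work lies (Steps~5--6: careful handling of atoms and the endpoint $a=\inf\{t:N(t)=\infty\}$, a three-case analysis on $|x-y|$, and the super-additivity of $N$). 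Your sketch compresses precisely this part into one sentence, and the tail-integral estimate you mention does not on its own discharge the increment condition. So the proposal correctly identifies the toolbox but both over-invests in a comparison that is not needed and under-invests in the verification that is.
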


The (convex) ICI tensorises and, consequently, the property (convex) $\textrm{IC}$ tensorises: if independent random vectors $X_i$ satisfy (convex) $\textrm{IC}(\beta_i)$, $i=1,\ldots,n$, then the vector $(X_1,\ldots,X_n)$ satisfies (convex) $\textrm{IC}(\max \beta_i)$ (see \cite{MR1097258} and \cite{MR2449135}). Therefore we have the following corollary.

\begin{corollary}
Let $X$ be a symmetric random vector with values in $\RR^n$ and independent coordinates with log-concave tails. Then $X$ satisfies convex $\IC$  with a~universal constant $\beta\leq 1680e$.
\end{corollary}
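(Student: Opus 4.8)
The plan is to deduce the corollary from Theorem~\ref{thm:main} together with the tensorisation property of the convex ICI recorded just above the statement. Write $X=(X_1,\dots,X_n)$ with $X_1,\dots,X_n$ independent. Each $X_i$ is a symmetric real random variable with log-concave tails, so Theorem~\ref{thm:main} applies to it; moreover the constant it provides is absolute and does not depend on the law of the variable, so one and the same $\beta\le 1680e$ works for all the $X_i$ simultaneously. Hence each $X_i$ satisfies convex $\IC$ with this common $\beta$, i.e.\ the pair $(X_i,\Lambda^*_{X_i}(\cdot/\beta))$ satisfies the convex ICI on $\RR$.

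Next I would pass to the product. The one point requiring a short argument is the identification of the cost function. Since the coordinates are independent, $\Lambda_X(x)=\ln\EE e^{\langle x,X\rangle}=\sum_{i=1}^n\Lambda_{X_i}(x_i)$, and the Legendre transform of a separable sum is the coordinatewise sum of the Legendre transforms, so $\Lambda^*_X(x)=\sum_{i=1}^n\Lambda^*_{X_i}(x_i)$ and therefore $\Lambda^*_X(x/\beta)=\sum_{i=1}^n\Lambda^*_{X_i}(x_i/\beta)$. This is exactly a coordinatewise sum of the one-dimensional cost functions, which is precisely the form in which the tensorisation of the (convex) ICI is stated in \cite{MR1097258,MR2449135}: if $(X_i,\varphi_i)$ satisfies the convex ICI on $\RR$ for each $i$, then $\bigl(X,\,x\mapsto\sum_{i=1}^n\varphi_i(x_i)\bigr)$ satisfies the convex ICI on $\RR^n$. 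Applying this with $\varphi_i=\Lambda^*_{X_i}(\cdot/\beta)$ gives that $X$ satisfies convex $\IC$ with $\beta\le 1680e$, which is the assertion.

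I do not expect a genuine obstacle here: the corollary is a formal consequence of the one-dimensional Theorem~\ref{thm:main} and the tensorisation of the convex ICI, which (as recalled above) is already available from \cite{MR2449135}. The only minor bookkeeping point is the trivial reduction when some coordinate is a point mass at the origin, in which case that factor contributes $1$ to each side of~\eqref{eq:def-IC} and may simply be discarded.
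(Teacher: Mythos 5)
Your argument is correct and matches the paper's: Theorem~\ref{thm:main} gives each coordinate convex $\IC$ with a single absolute $\beta\le 1680e$, and the tensorisation property of convex $\IC$ (stated just before the corollary, with references) yields the conclusion. Your extra observation that $\Lambda_X^*$ splits coordinatewise for independent coordinates is exactly the content behind the statement that the property $\IC$ tensorises, so it is a slightly more explicit version of the same reasoning rather than a different route.
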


Note that the class of distributions from Theorem~\ref{thm:main} is wider than the class of symmetric log-concave product distributions considered by Lata{\l}a and Wojtaszczyk in \cite{MR2449135}. Among others, it contains measures which do not have a connected support, e.g. a symmetric Bernoulli random variable.

In order to comment on the relevance of the assumptions of Theorem~\ref{thm:main}  and present applications to comparison of weak and strong moments, we need the following definition.
	Let $X$ be a random vector with values in $\RR^n$. We say that the moments of $X$ \emph{grow $\alpha$-regularly} if for every $p\geq q\geq 2$ and every $\theta \in S^{n-1}$ we have
	\begin{equation*}
		\| \langle X,\theta \rangle \|_p \le \alpha \frac{p}q \| \langle X,\theta \rangle \|_q,
	\end{equation*}
	where $\|Y\|_p:=  (\EE |Y|^p)^{1/p}$ is the $p$-th integral norm of a random variable $Y$.
Clearly, if the moments of $X$ grow $\alpha$-regularly, then  $\alpha$ has to be at least $1$ (unless $X=0$ a.s.). 

\begin{remark}\label{rem:tails-reg}
If $X$ is a symmetric random variable with log-concave tails, then its moments grow $1$-regularly (this classical fact follows for instance from Proposition 5.5 from \cite{MR3329056} and the proof of Proposition 3.8 from \cite{MR2449135}). 
\end{remark}

The assumption of log-concave tails in Theorem \ref{thm:main} cannot be replaced by a~weaker one of $\alpha$-regularity of moments: if $X$ is a symmetric random variable defined by
\begin{equation}
\label{eq:example-intro}
\PP(|X| >t ) = \indic{[0,2)} (t) + \sum_{k=1}^{\infty} e^{-2^k} \indic{[2^k, 2^{k+1})}(t), \quad t\geq 0,
\end{equation}
then the moments of $X$ grow $\alpha$-regularly (for some $\alpha<\infty$), but there does not exists $C>0$ such that the pair $(X, x\mapsto\max\{(Cx)^2,C|x|\})$ satisfies the convex ICI. All the more, $X$ cannot satisfy convex $\IC$ with any $\beta<\infty$ (see Section~\ref{sec:example} for details). Thus it seems that the assumptions of Theorem~\ref{thm:main} are not far from necessary  conditions for the convex ICI to hold with an optimal cost function (random variables with moments growing regularly are akin to random variables with log-concave tails 
as the former can essentially be sandwiched between the latter,
see (4.6) in \cite{MR3335827}). 

Our second main result is an application of Theorem~\ref{thm:main} to moment comparison.  Recall that for a~random vector $X$ its $p$-th weak moment associated with a norm $\|\cdot\|$ is the quantity defined as
	\begin{equation*}
		\sigma_{\|\cdot\|, X}(p) := \sup_{\|t\|_* \le 1} \|\langle t,X\rangle \|_p,
	\end{equation*}
	where $\|\cdot\|_*$ is the dual norm of $\| \cdot\|$.
The following version of \cite[Proposition~3.15]{MR2449135}
holds (some non-trivial modifications of the proof are necessary
in order to deal with the fact that the inequality~\eqref{eq:def-IC} only holds for convex functions).

\begin{theorem}\label{prop:IC-moments}
	Let $X$ be a symmetric random vector with values in $\RR^n$ which moments grow $\alpha$-regularly. Suppose moreover that $X$ satisfies convex $\IC$. Then for  every norm $\|\cdot \|$ on $\RR^n$ and  every $p\ge 2$  we have
	\begin{equation*}
		\Bigl( \EE \bigl|\|X\|-E\|X\|\bigr|^p \Bigr)^{1/p} \leq   C \alpha\beta\sigma_{\|\cdot\|, X}(p) ,
	\end{equation*}
	where $C$ is a universal constant (one can take $C= 4\sqrt{2}e < 16$).
\end{theorem}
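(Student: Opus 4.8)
The plan is to follow the strategy behind \cite[Proposition~3.15]{MR2449135}, arranged so that \eqref{eq:def-IC} is only ever tested against a convex function. Fix a norm $\|\cdot\|$ on $\RR^n$, write $\varphi:=\Lambda^*_X(\cdot/\beta)$ for the optimal cost function (so that, by hypothesis, $(X,\varphi)$ satisfies the convex ICI), and set $Z:=\|X\|-\EE\|X\|$ and $\psi(t):=\sup_{z\in\RR^n}\{t\|z\|-\varphi(z)\}$ for $t>0$. Applying the convex ICI to the convex, nonnegative test function $f:=t\|\cdot\|$ and using the triangle inequality $\|y\|\ge\|x\|-\|x-y\|$,
\begin{equation*}
f\infconv\varphi(x)=\inf_{y\in\RR^n}\bigl\{t\|y\|+\varphi(x-y)\bigr\}\ge t\|x\|-\psi(t),
\end{equation*}
so \eqref{eq:def-IC} gives $e^{-\psi(t)}\,\EE e^{t\|X\|}\cdot\EE e^{-t\|X\|}\le1$, i.e.\ $\EE e^{t\|X\|}\,\EE e^{-t\|X\|}\le e^{\psi(t)}$. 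In particular $\EE e^{t\|X\|}<\infty$ (hence $\EE\|X\|<\infty$) whenever $\psi(t)<\infty$, which will hold in the range of $t$ we use.

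This is the point at which testing against the concave function $-t\|\cdot\|$ would be illegal; instead, the two-sided estimate just obtained already encodes both tails. By Jensen's inequality $\EE e^{t\|X\|}\ge e^{t\EE\|X\|}$ and $\EE e^{-t\|X\|}\ge e^{-t\EE\|X\|}$, so dividing $\EE e^{t\|X\|}\,\EE e^{-t\|X\|}\le e^{\psi(t)}$ by one factor and then by the other yields $\EE e^{tZ}\le e^{\psi(t)}$ and $\EE e^{-tZ}\le e^{\psi(t)}$, whence $\EE e^{t|Z|}\le\EE e^{tZ}+\EE e^{-tZ}\le2e^{\psi(t)}$. Combining this with the elementary inequality $x^p\le(p/(et))^pe^{tx}$ ($x\ge0$) gives $\EE|Z|^p\le2(p/(et))^pe^{\psi(t)}$, i.e.
\begin{equation*}
\|Z\|_p\le\frac{p}{et}\,2^{1/p}\,e^{\psi(t)/p}.
\end{equation*}

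It remains to estimate $\psi(t)$ and to choose $t$. Writing $t\|z\|=\sup_{\|u\|_*\le t}\langle u,z\rangle$, exchanging the two suprema, and using $\calL\Lambda^*_X=\Lambda_X$ (valid since $\Lambda_X$ is a proper closed convex function, finite near $0$ under our assumptions) one obtains
\begin{equation*}
\psi(t)=\sup_{\|u\|_*\le t}\Lambda_X(\beta u)=\sup_{\|v\|_*\le\beta t}\ln\EE e^{\langle v,X\rangle}.
\end{equation*}
To bound $\ln\EE e^{\langle v,X\rangle}$ for $\|v\|_*\le s$, set $Y:=\langle v,X\rangle$; by symmetry $\EE e^{Y}=\EE\cosh Y\le\EE e^{|Y|}=\sum_{m\ge0}\EE|Y|^m/m!$, and for $m\ge2$ one has $\|Y\|_m\le\|v\|_*\,\sigma_{\|\cdot\|,X}(m)\le s\,\sigma_{\|\cdot\|,X}(p)\,(1+\alpha m/p)$, using $\sigma_{\|\cdot\|,X}(m)\le\sigma_{\|\cdot\|,X}(p)$ for $m\le p$ and $\sigma_{\|\cdot\|,X}(m)\le\alpha\tfrac mp\,\sigma_{\|\cdot\|,X}(p)$ for $m\ge p$ (the latter being $\alpha$-regularity, which is scale invariant and so extends from $S^{n-1}$ to the whole dual ball). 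Taking $s$ to be a suitable universal multiple of $p/(\alpha\sigma_{\|\cdot\|,X}(p))$ and using $m!\ge(m/e)^m$ together with $(1+p/m)^m\le e^p$, each summand with $m\ge1$ is at most $4^{-m}e^p$, so $\EE e^{Y}\le1+e^p/3\le e^p$ and hence $\psi(t)\le p$ once $\beta t=s$. Choosing $\beta t$ to be this $s$, so that $p/(et)$ is a fixed universal multiple of $\alpha\beta\sigma_{\|\cdot\|,X}(p)$ and $2^{1/p}e^{\psi(t)/p}\le\sqrt2\,e$ for $p\ge2$, and keeping track of the constants produces the asserted inequality with $C=4\sqrt2\,e$.

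The conceptual core is the reduction in the first two paragraphs: everything follows from the single convex application $f=t\|\cdot\|$, with the lower deviation of $\|X\|$ recovered from Jensen's inequality rather than from an inadmissible test function $-t\|\cdot\|$ — this is the "non-trivial modification" of \cite[Proposition~3.15]{MR2449135} referred to above. The one genuinely computational step is the last, namely producing the clean bound $\psi(t)\le p$ for the supremum of the cumulant generating function over the dilated dual ball with the right scaling in $p$, $\alpha$, and $\sigma_{\|\cdot\|,X}(p)$; this is also what pins down the numerical value of $C$, all remaining estimates being soft.
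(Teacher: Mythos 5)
Your proof is correct and follows essentially the same route as the paper: the key step in both is to test the convex ICI only with $f = t\|\cdot\|$ and to recover the lower deviation via Jensen's inequality applied to $\EE e^{-t\|X\|}$, and both bound the cumulant $\Lambda_X$ on a dual ball using $\alpha$-regularity via the same Taylor-series estimate. The only cosmetic differences are that you package the infimum-convolution lower bound through the Legendre dual $\psi(t)=\sup_{\|u\|_*\le\beta t}\Lambda_X(u)$ (rather than the paper's minimax argument in its Lemma~\ref{lm:infconv-lowerbound}) and pass from exponential to polynomial moments via the pointwise inequality $x^p\le(p/(et))^pe^{tx}$ rather than Markov plus tail integration, arriving at the same constant $C=4\sqrt2\,e$.
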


Immediately we obtain the following corollary in the spirit of the results from \cite{MR2276533,
 MR3150710, lat-strz-2, MR3503729}.  Similar inequalities for Rademacher sums with the emphasis on exact values of constants have also been studied by Oleszkiewicz (see \cite[Theorem~2.1]{MR3273447}).

\begin{corollary}\label{cor:weak-strong}
	Let $X$ be a symmetric random vector with values in $\RR^n$ and with independent coordinates which have log-concave tails.
 Then for  every norm $\|\cdot \|$ on $\RR^n$ and  every $p\ge 2$  we have
		\begin{equation}\label{eq:comp_mom2bis}
		\bigl( \EE \|X\|^p \bigr)^{1/p} \le  \EE \|X\| +  D\sigma_{\|\cdot\|, X}(p),
	\end{equation}
	where $D$ is a universal constant (one can take $D = 6720\sqrt{2}e^2 < 70223$). 
\end{corollary}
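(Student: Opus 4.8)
The plan is to derive Corollary~\ref{cor:weak-strong} directly from Theorem~\ref{prop:IC-moments} by bookkeeping the three constants that enter. First I would invoke Remark~\ref{rem:tails-reg}: each coordinate $X_i$ of $X$ is a symmetric random variable with log-concave tails, so its one-dimensional moments grow $1$-regularly. Since $X$ has independent coordinates, the growth condition for $\|\langle X,\theta\rangle\|_p$ with $\theta\in S^{n-1}$ reduces (after expanding in the coordinates and using that the coordinates are independent and symmetric, hence $\|\langle X,\theta\rangle\|_p$ is comparable to $(\sum_i \theta_i^2 \|X_i\|_p^2)^{1/2}$ up to the usual Rosenthal-type constants, or more cheaply by a convexity/homogeneity argument) to the coordinatewise growth. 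The cleanest route is to note that $p\mapsto \|\langle X,\theta\rangle\|_p$ is, for a sum of independent symmetric variables each with $1$-regular moments, itself $\alpha$-regular with an absolute $\alpha$; one expects $\alpha$ to be a small universal constant, and tracking the numerology one should land on the factor that, multiplied against the other two, gives $6720\sqrt{2}e^2$.

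Next I would apply the Corollary to Theorem~\ref{thm:main} (the tensorised version stated right after it): since each $X_i$ has log-concave tails, the vector $X$ satisfies convex $\IC$ with $\beta\le 1680e$. Now both hypotheses of Theorem~\ref{prop:IC-moments} are in force, so for every norm and every $p\ge 2$,
\begin{equation*}
\Bigl(\EE\bigl|\,\|X\|-\EE\|X\|\,\bigr|^p\Bigr)^{1/p}\le C\alpha\beta\,\sigma_{\|\cdot\|,X}(p),
\end{equation*}
with $C=4\sqrt2 e$. The final step is the triangle inequality in $L^p$:
\begin{equation*}
\bigl(\EE\|X\|^p\bigr)^{1/p}\le \EE\|X\| + \Bigl(\EE\bigl|\,\|X\|-\EE\|X\|\,\bigr|^p\Bigr)^{1/p}\le \EE\|X\| + C\alpha\beta\,\sigma_{\|\cdot\|,X}(p),
\end{equation*}
so that $D=C\alpha\beta = 4\sqrt2 e\cdot\alpha\cdot 1680e$. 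Matching the claimed value $D=6720\sqrt2 e^2$ forces $\alpha=1$; that is, the point is that for a vector with independent symmetric coordinates having log-concave tails the directional moments grow $1$-regularly, not merely $\alpha$-regularly for some larger $\alpha$.

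Hence the genuine content hidden in this corollary, and the step I expect to be the main obstacle, is justifying that $\alpha=1$ works: that $\|\langle X,\theta\rangle\|_p \le (p/q)\|\langle X,\theta\rangle\|_q$ for all $p\ge q\ge 2$ and all unit $\theta$, given only that each coordinate has log-concave tails. One cannot simply quote the one-dimensional Remark~\ref{rem:tails-reg} coordinatewise, because a weighted sum of independent log-concave-tailed variables need not have log-concave tails; instead I would argue that $q\mapsto \|\langle X,\theta\rangle\|_q$ still satisfies the same growth estimate, either by a direct moment computation using independence and symmetry together with the coordinatewise $1$-regularity (each $\|X_i\|_p\le (p/q)\|X_i\|_q$), propagated through the $\ell_q\to\ell_p$-type comparison of $(\EE|\sum a_i\eps_i Y_i|^p)^{1/p}$, or by appealing to the structural fact cited in the paper (the sandwiching (4.6) in \cite{MR3335827}) that lets one compare with a genuine log-concave-tailed variable at the cost of absolute constants — and then checking that these constants have already been absorbed into the passage from Theorem~\ref{prop:IC-moments}'s $C$ to the stated $D$. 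Everything else — the tensorisation of $\IC$, the value $\beta\le 1680e$, and the $L^p$ triangle inequality — is immediate from results already established in the excerpt.
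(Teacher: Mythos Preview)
Your route is precisely the paper's: combine the tensorised Theorem~\ref{thm:main} (giving $\beta\le 1680e$), Remark~\ref{rem:tails-reg} (giving $\alpha=1$), and Theorem~\ref{prop:IC-moments} (with $C=4\sqrt2 e$), then apply the $L^p$ triangle inequality to obtain $D=C\alpha\beta=6720\sqrt2 e^2$. The paper says only that the corollary follows ``immediately'' and, at the start of Section~\ref{sec:comparison}, that ``in view of Theorem~\ref{thm:main} and Remark~\ref{rem:tails-reg}, it is enough to show Theorem~\ref{prop:IC-moments}''; no separate argument is given.

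On the point you single out as the main obstacle: you are correct that a linear combination of independent symmetric log-concave-tailed variables need not itself have log-concave tails (already $\eps_1+\eps_2$ for independent Rademachers has $N(0)=0$, $N(t)=\ln 2$ on $(0,2]$, which is not convex), so Remark~\ref{rem:tails-reg} does not apply verbatim to $\langle X,\theta\rangle$. The paper, however, does not pause over this; it simply invokes Remark~\ref{rem:tails-reg} and takes $\alpha=1$, presumably regarding the directional $1$-regularity as covered by the references cited there. None of the Rosenthal-type or sandwiching arguments you sketch are carried out in the paper. So the extra content you anticipate is not something the paper's proof supplies---if you want that step justified, you would indeed have to supply it yourself, but for the purpose of matching the paper's derivation your outline already contains everything the authors wrote.
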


Note that each of the terms on the right-hand side of~\eqref{eq:comp_mom2bis} is, up to a constant, dominated by the left-hand side of~\eqref{eq:comp_mom2bis}, so \eqref{eq:comp_mom2bis} yields  the comparison of weak and strong moments of the norms of $X$.

Note also that the constant standing at $\EE \|X\|$ is equal to $1$. If we only assume that the coordinates of $X$ are independent and their moments grow $\alpha$-regularly, then~\eqref{eq:comp_mom2bis} does not always hold (the example here is a vector with independent coordinates distributed like in~\eqref{eq:example-intro}; see Section~\ref{sec:example} for details), although by~\cite[Theorem~1.1]{lat-strz-2} it holds if we allow the constant at $\EE \|X\|$ to be greater than $1$ and to depend on $\alpha$. Hence Corollary \ref{cor:weak-strong} and example ~\eqref{eq:example-intro} partially answer  the following question raised in~\cite{lat-strz-2}: ``For which vectors does the comparison of weak and strong moments hold with constant $1$ at the first strong moment?''

The organization of the paper is the following. In Section~\ref{sec:proof-main} and~\ref{sec:comparison} we present the proofs of Theorem~\ref{thm:main} and Proposition~\ref{prop:IC-moments} respectively. In Section~\ref{sec:example} we discuss example~\eqref{eq:example-intro} in details.

\section{Proof of Theorem~\ref{thm:main}}\label{sec:proof-main}

Our approach is based on a characterization -- provided by  Gozlan, Roberto, Samson, Shu, and Tetali in~\cite{grsst15} -- of measures on the real line which satisfy a weak transport-entropy inequality. We emphasize that our optimal cost functions need not be quadratic near the origin, therefore we cannot apply their characterization as is, but have to first fine-tune the cost functions a bit. We shall also need the following simple lemma.

\begin{lemma}\label{lem:x^2}
If $X$ is a symmetric random variable and $\EE X^2 = \beta_1^{-2}$, then 
\[
\Lambda_X^*(x/\beta_1) \le x^2\quad \text{for } |x|\le 1.\]
\end{lemma}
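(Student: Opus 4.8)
The plan is to bound the Legendre transform $\Lambda_X^*$ from above by exhibiting a good lower bound on the cumulant-generating function $\Lambda_X(y) = \ln \EE e^{yX}$ and then dualizing. Since $X$ is symmetric, $\EE e^{yX} = \EE \cosh(yX)$, and the elementary inequality $\cosh(u) \le e^{u^2/2}$ gives $\EE e^{yX} \le e^{y^2 \EE X^2 / 2} = e^{y^2/(2\beta_1^2)}$, hence $\Lambda_X(y) \le y^2/(2\beta_1^2)$ for all $y \in \RR$. Taking Legendre transforms reverses the inequality: since the Legendre transform of $y \mapsto y^2/(2\beta_1^2)$ is $x \mapsto \beta_1^2 x^2/2$, we get $\Lambda_X^*(x) \ge \beta_1^2 x^2 / 2$ — but this is the wrong direction for what we want.

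So instead I would dualize the other way: to bound $\Lambda_X^*(x/\beta_1)$ from \emph{above} I need a lower bound on $\Lambda_X$. Here one uses that for a symmetric random variable $\EE e^{yX} = \EE\cosh(yX) \ge 1 + \tfrac12 y^2 \EE X^2$ by the term-by-term bound $\cosh(u) \ge 1 + u^2/2$; combined with $\ln(1+s)\ge s - s^2/2$ or simply $\ln(1+s) \ge \tfrac{s}{1+s}$ this still needs care because the second moment alone controls the generating function only near $0$. The cleaner route: by Jensen (or just $\cosh u \ge 1+u^2/2$) we have $\Lambda_X(y) \ge \ln(1 + \tfrac{1}{2}\beta_1^{-2} y^2)$. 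Then by definition, for $|x|\le 1$,
\begin{equation*}
\Lambda_X^*(x/\beta_1) = \sup_{y} \Bigl( \tfrac{x}{\beta_1} y - \Lambda_X(y) \Bigr) \le \sup_{y} \Bigl( \tfrac{x}{\beta_1} y - \ln\bigl(1 + \tfrac{1}{2}\beta_1^{-2} y^2\bigr) \Bigr).
\end{equation*}
Substituting $s = y/\beta_1$ turns the right-hand side into $\sup_s \bigl( xs - \ln(1 + s^2/2) \bigr)$, a one-variable calculus problem independent of $\beta_1$, and it remains to check that this supremum is at most $x^2$ whenever $|x|\le 1$.

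The last step is routine calculus: define $g(s) = xs - \ln(1+s^2/2)$; then $g'(s) = x - \frac{s}{1+s^2/2}$, and since $\bigl|\frac{s}{1+s^2/2}\bigr| \le \frac{1}{\sqrt 2} < 1$, for $|x|\ge \frac{1}{\sqrt2}$ there is no critical point and one has to be slightly more careful, but for $|x|\le \frac1{\sqrt2}$ the critical point satisfies $s \le$ (something bounded) and plugging back gives $g(s) \le xs \le x^2 \cdot(\text{const})$. In fact the crude bound $g(s) \le xs - \tfrac{s^2}{2} + \tfrac{s^4}{8} \le \ldots$ is messier than needed; the cleanest is to note $\ln(1+s^2/2) \ge \frac{s^2/2}{1+s^2/2} \ge \frac{s^2}{2} - \frac{s^4}{4}$ only near zero. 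I expect the main (and only) obstacle to be pinning down the precise elementary inequality that makes $\sup_s(xs - \ln(1+s^2/2)) \le x^2$ hold on all of $|x|\le 1$ with the stated clean constant $1$; everything else — symmetry, $\cosh u \ge 1+u^2/2$, and the scaling substitution — is immediate. Given that the target constant in Theorem~\ref{thm:main} is enormous ($1680e$), the author may well be content with any bound of the form $\Lambda_X^*(x/\beta_1) \le Cx^2$ and then renormalize; I would first try to get the clean constant $1$ via the substitution above and only fall back to a weaker constant if the calculus proves stubborn.
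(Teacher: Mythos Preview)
Your overall strategy --- lower-bound $\Lambda_X$ and dualize --- is exactly right, and matches the paper. But the specific lower bound you chose is fatally weak. From $\cosh u \ge 1+u^2/2$ you get $\Lambda_X(y) \ge \ln\bigl(1+\tfrac12\beta_1^{-2}y^2\bigr)$, which grows only \emph{logarithmically} in $y$. Consequently, for any $x\ne 0$,
\[
\sup_{s}\bigl(xs - \ln(1+s^2/2)\bigr) = +\infty,
\]
since the linear term $xs$ dominates the logarithm as $|s|\to\infty$. So the inequality you set out to verify in the last paragraph is simply false; the issue you flagged (``no critical point for $|x|\ge 1/\sqrt{2}$'') is not a technicality but a symptom of this divergence. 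No amount of calculus will rescue the bound, and relaxing the constant from $1$ to some $C$ does not help either.

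The fix is to keep \emph{all} the even moments, not just the second. Since $X$ is symmetric,
\[
\EE e^{yX} = 1 + \sum_{k\ge 1}\frac{y^{2k}\,\EE X^{2k}}{(2k)!}
\;\ge\; 1 + \sum_{k\ge 1}\frac{y^{2k}\,(\EE X^{2})^{k}}{(2k)!}
= \cosh\bigl(y/\beta_1\bigr),
\]
using $\|X\|_{2k}\ge \|X\|_2$. This is precisely the paper's argument. Now $\Lambda_X(y)\ge \ln\cosh(y/\beta_1)$ grows linearly at infinity, so after the same substitution you reduce to showing
\[
\mathcal{L}\bigl(\ln\cosh(\cdot)\bigr)(x)
= \tfrac{1+x}{2}\ln(1+x)+\tfrac{1-x}{2}\ln(1-x)
\;\le\; x^2 \qquad (|x|\le 1),
\]
which is a genuine one-variable inequality (the paper cites \cite[Proposition~3.3]{MR2449135}; it also follows by checking that both sides vanish to second order at $0$ and comparing Taylor coefficients, or by noting the left side tends to $\ln 2<1$ at $|x|=1$ and is bounded by $x^2$ in between).
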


\begin{proof}
Since $X$ is symmetric, we have
\begin{equation*}
	 \EE e^{tX} = 1 + \sum_{k=1}^\infty  \frac{\|X\|_{2k}^{2k} t^{2k} }{(2k)!} \ge 1+   \sum_{k=1}^\infty    \frac{\|X\|_2^{2k} t^{2k} }{(2k)!} =  1+   \sum_{k=1}^\infty    \frac{ \beta_1^{-2k}t^{2k} }{(2k)!}
	  =  \cosh(\beta_1^{-1}|t|).
\end{equation*}
Moreover, $\calL \bigl(\ln \cosh(\cdot)\bigr)( |u|)\le |u|^2$ for $|u|\le 1$ (see for example the proof of~ ~\cite[Proposition~3.3]{MR2449135}). Therefore
\begin{equation*}
	\Lambda_X^*(x/\beta_1) = \mathcal{L}(\Lambda_X(\beta_1\cdot)) (x) \le \mathcal{L}(\ln \cosh(\cdot))(x) \le x^2 \qquad \mbox{for } |x|\le 1.\qedhere
\end{equation*}
\end{proof}

Throughout the proof $g^{-1}$ stands for the generalized inverse of a function $g$  defined as
\begin{equation*}
	g^{-1}(y):= \inf \{ x: g(x) \ge y \}.
\end{equation*}

\begin{proof}[Proof of Theorem~\ref{thm:main}] Note that $N(0)=0$ and the function $N$ is non-decreasing. First we tweak the assumptions and change the assertion to a more straightforward one.

\krok{Step 1 (first reduction).} We claim that it suffices to prove the assertion for random variables for which the function $N$ is strictly increasing on the set where it is finite (or, in other words, $N(t)=0$ only for $t=0$). Indeed, suppose we have done this and let now $X$ be any random variable satisfying the assumptions of the theorem. Let $X_{\eps}$ be a symmetric random variable such that $\PP(|X_{\eps}|\geq t) = \exp(-N_{\eps}(t))$, where $N_{\eps}(t) =N(t)\lor \eps t$. If $X$ and $X_{\eps}$ are represented in the standard way by the inverses of their CDFs on the probability space $(0,1)$, then $|X_{\eps}|\leq |X|$ a.s. (and also $X_{\eps}\to X$ a.s. as $\eps\to 0^+$). Hence $\Lambda_{X_{\eps}}\leq \Lambda_{X}$ and therefore also $\Lambda^*_{X_{\eps}}\geq \Lambda^*_{X}$. 

The theorem applied to the random variable $X_{\eps}$ and the above inequality imply that the pair $(X_{\eps}, \Lambda^*_{X}(\cdot/\beta))$ satisfies the convex ICI. Taking $\eps\to 0^+$ we get the assertion for $X$ (in the second integral we just use the fact that the test function $f$ is bounded from below and thus $e^{-f}$ is bounded from above; for the first integral it suffices to prove the convergence of integrals on any interval $[-M,M]$, and on such an interval we have $f\infconv \Lambda^*_{X} (x/\beta) \leq f(x) +  \Lambda^*_{X} (0) =f(x)$, and thus $\exp(\max_{[-M,M]} f)$ is a good majorant).

\krok{Step 2 (second reduction).} We claim that it suffices to prove the assertion for random variables such that $\Lambda_X<\infty$. Indeed, suppose we have done this and let $X$ be any random variable satisfying the assumptions of the theorem. Let $N_{\eps}(t) =N(t)\lor \eps^2 t^2$ and let $X_{\eps}$ be a symmetric random variable such that $\PP(|X_{\eps}|\geq t) = \exp(-N_{\eps}(t))$. Then, similarly as in Step 1., $\Lambda_{X_{\eps}}\leq \Lambda_Y <\infty $, where $Y$ is symmetric and $\PP(|Y|\geq t) = \exp(- \eps^2 t^2)$. Thus we can apply the proposition to $X_\eps$ and we continue as in Step 1.

\krok{Step 3 (scaling).}  Due to the scaling properties of the Legendre transform, we can assume that $\EE X^2 = \beta_1^{-2}$, where $\beta_1:=  2e$ (the case where $X\equiv 0$ is trivial). Note that then, by Markov's inequality, $e^{-N(1/2)} = \PP (|X|\ge \frac 12) \le 4 \EE X^2 = e^{-2}$, so 
\begin{equation}\label{N(1/2)}
N(1/2) \ge 2.
\end{equation}

\krok{Step 4 (reformulation).} For $x\in \RR$ let
\begin{equation*}
\varphi(x):= \bigl(x^2 \indic{\{|x|<1\}} +(2|x|-1)\indic{\{|x|\ge1\}} \bigr) \lor \Lambda_X^*(x/(2{\beta_1})).
\end{equation*}  We claim that there exists a universal constant $\widetilde{b}\leq 1/420$, such that the pair $(X, \varphi(\tilde{b} \cdot))$ satisfies the convex infimum convolution inequality.  Of course the assertion follows immediately from that.

Note that $\varphi$ is convex, increasing on $[0,\infty)$ (because $\Lambda_X^*(\cdot /(2\beta_1))$ is convex and symmetric and thus non-decreasing on $[0,\infty)$). Crucially, $\varphi(x)=x^2$ for $x\in [0,1]$ (by Lemma~\ref{lem:x^2}), so the cost function $\varphi$ is quadratic near zero. Moreover, by Lemma \ref{lem:x^2}, $\varphi^{-1}(3)= 2$.

 Let $U=F^{-1} \circ F_{\nu}$, where $F$, $F_{\nu}$ are the distribution functions of $X$ and the symmetric exponential measure $\nu$ on  $\RR$, respectively. By~\cite[Theorem~1.1]{grsst15} we know that if there exists $b>0$ such that for every $x,y \in \RR$ we have
	\begin{equation}\label{cond_v1}
		\big|U(x)-U(y) \big| \le \frac{1}b \varphi^{-1} \bigl(1+ |x-y| \bigr),
	\end{equation}
then the pair $(X, \varphi(\widetilde{b}\cdot))$, where $\widetilde{b} = \frac{b}{210\varphi^{-1}(2+1^2)} = \frac{b}{420} $, satisfies the convex ICI. We will show that~\eqref{cond_v1} holds with $b=1$.

\krok{Step 5 (further reformulation).} Let $a=\inf\{t>0 : N(t) =\infty\}$. We have three possibilities (recall that $N$ is left-continuous):
\begin{itemize}
\item $a=\infty$. Then $N$ is continuous, increasing, and transforms $[0,\infty]$ onto $[0,\infty]$. Also, $F$ is increasing and therefore $F^{-1}$ is the usual inverse of $F$.
\item $a<\infty$ and $N(a)<\infty$. Then $X$ has an atom at $a$. Moreover, $N(a) = \lim_{t\to a^-}N(t)$.
\item $a<\infty$ and $N(a)=\infty = \lim_{t\to a^-}N(t)$.
\end{itemize}
Of course, in the first case one can extend $N$ by putting $N(a)=\infty$, so that all formulas below make sense.

Note that
\begin{equation*}
F(t) =
\begin{cases}
 \frac{1}{2}\exp(-N(|t|)) & \text{if } t<0,\\
 1-\frac{1}{2}\exp(-N_+(t)) & \text{if } t\geq0,\\
\end{cases}
\end{equation*}
where $N_+(t)$ denotes the right-sided limit of $N$ at $t$ (which is different from $N(t)$ only if $t=a$ and $X$ has an atom at $a$). Hence, $F$ is continuous on the interval $(-a,a)$, the image of $(-a,a)$ under $F$ is the interval $\big(\frac{1}{2}\exp(-N(a)), 1-\frac{1}{2}\exp(-N(a))\big)$, and we have $F(-a) = \frac{1}{2}\exp(-N(a))$ and $F(a) = 1$. Since the image of $\RR$ under $U$ is equal to the image of $(0,1)$ under $F^{-1}$, we conclude that $U(\RR) = (-a,a)$ if $N(a)=\infty$ and $U(\RR) = [-a,a]$ if $N(a)<\infty$. Denote $A:=U(\RR)$.

When $N(a) < \infty$, it suffices to check condition~\eqref{cond_v1} for $x,y\in [-N(a),N(a)] $  (otherwise one can change $x$, $y$ and decrease the right-hand side while not changing the value of the left-hand side of \eqref{cond_v1}). For $x\in  [-N(a),N(a)]$ we can write $U^{-1}(x)= N(|x|)\sgn(x)$ and $U^{-1}(x)\in\RR$. When $N(a) = \infty$, $U$ is a bijection (on its image), so we can obviously write again  $U^{-1}(x)= N(|x|)\sgn(x)$ for any $x \in \RR$.

Therefore, in order to verify~\eqref{cond_v1}  we need to check that
	\begin{equation}\label{cond_v1.5}
	|x-y |\le \varphi^{-1} \bigl( 1+\big|N(|x|)\sgn(x)- N(|y|)\sgn(y) \big| \bigr) \qquad \text{for } x, y\in A .
	\end{equation}

	Since we consider the case when $\Lambda_X(t)$ is finite for every $t\in \RR$, the Chernoff inequality applies, so for $t\ge \EE X=0$ we have 
	\begin{equation*}
		\frac 12 e^{-N(t)} = \PP( X\ge t) \le  e^{-\Lambda_X^*(t)},
	\end{equation*}
	so 
	\begin{equation}\label{Chern}
		N(t) \ge \Lambda_X^*(t) -\ln2.
	\end{equation}

 Note that $\varphi(|x-y|)<\infty$ for $x,y\in A$, since $\varphi(|x-y|)=\infty$ would imply $\Lambda^*_X (|x-y|/(2\beta_1))=\infty$, and hence  $\Lambda^*_X (|x-y|/2)=\infty$,  and -- by \eqref{Chern} -- also $N(|x-y|/2)=\infty$, but for $x,y\in A$ we have $|x-y|/2\in[0,a)$ when $N(a)=\infty$ or $|x-y|/2\in[0,a]$ when $N(a) < \infty$ and in either case $N(|x-y|/2)$ is finite. Therefore for every $x,y\in A$ we have $\varphi(|x-y|)<\infty$. Since $\varphi^{-1}(\varphi(z))=z$ for $z$ such that $\varphi(z)<\infty$ (because $\varphi$ is then continuous and increasing on $[0,z]$), the condition~\eqref{cond_v1.5} is implied by
	\begin{equation}\label{cond_v2}
	\varphi\bigl( |x-y | \bigr) \le 1+ \big|N(|x|)\sgn x- N(|y|)\sgn y \big| \qquad \text{for } x,y\in A.
	\end{equation}
In the next step we check that this is indeed satisfied.
	
\krok{Step 6 (checking the condition).} Let $x_0 = \inf\{ x\geq 1 : 2x-1=\Lambda_X^*(\frac{x}{ 2\beta_1})\}$ (if $x_0=\infty$ we simply  do not have to consider Case 2 below).  We consider three cases. We repeatedly use the fact that $uN(t)\ge N(ut)$ for $u\le 1$, $t\ge 0$, which follows by the convexity of $N$ and the property $N(0)=0$.
	
	\textbf{Case 1.} $|x-y|\le 1$.  Then $\varphi\bigl( |x-y | \bigr) =  ( x-y  ) ^2 \le 1$, so \eqref{cond_v2} is trivially satisfied.
	
	\textbf{Case 2.}  $|x-y|\ge x_0$. Then $\varphi\bigl( |x-y | \bigr) = \Lambda_X^*(\frac{1}{2\beta_1}|x-y|)\leq \Lambda^*_X(|x-y|/2)$. Inequality \eqref{Chern} implies that in order to prove \eqref{cond_v2} it suffices to show that if $x$, $y$ are of the same sign, say $x, y  \ge 0$, then $N\bigl(|x-y|/2) \le |N(x)-N(y)| $ and if $x,y$ have different signs, we have $N\bigl(\bigl(|x|+|y|\bigr)/2\bigr) \le N(|x|) + N(|y|)$.
	
By the convexity of $N$, for $s, t \geq 0$ we have 
	\begin{equation*}
		 N\bigl((s+t)/2\bigr) \le \frac 12 N(s)+ \frac 12 N(t) \le N(s)+N(t)
	\end{equation*}
and
	\begin{equation*}
		N(s/2)+N(t)\le N(s)+N(t)
		 \le \frac{s}{s+t}N(s+t)+ \frac{t}{s+t}N(s+t)  = N(s+t).
	\end{equation*}
	 This finishes the proof of~\eqref{cond_v2} in Case 2.
	
	\textbf{Case 3.} $1\le |x-y| \le x_0$. Then $\varphi\bigl(|x-y|\bigr) = 2|x-y|-1$. Consider two sub-cases:
	\begin{itemize}
	\item[(i)] $x, y$ have different signs. Without loss of generality we may assume  $x\geq |y|\geq 0\geq y$. Thus in order to obtain~\eqref{cond_v2} it suffices to show that $N(x)\ge 2x+2|y|$. Note that $1\le x + |y|\le 2x$, so $x\ge \frac{1}2$. Thus
	\begin{equation*}
		N(x)\ge N(1/2)2x \mathop{\ge}^{\eqref{N(1/2)}} 4x\ge 2x +2|y|,
	\end{equation*}
which finishes the proof in case~(i).

\item[(ii)] $x, y$ have the same sign. Without loss of generality we may assume $x\geq y\geq 0.$ Thus it suffices to show that $2(x-y) \le N(x)-N(y)$. Note that due to the assumption of Case~3 we have $x\ge x-y \ge  1 \ge \frac 12$, so by the convexity of $N$ we have 
	\begin{equation*}
		\frac{N(x)-N(y)}{x-y} \ge \frac{N(\frac 12)-N(0)}{\frac12-0} \mathop{\ge}^{\eqref{N(1/2)}} 4 \ge 2
	\end{equation*}
	This ends the examination of case~(ii) and the proof of the theorem. \qedhere
	\end{itemize}
\end{proof}

\section{Comparison of weak and strong moments}\label{sec:comparison}
	
The goal of this section is to establish the comparison of weak and strong moments with respect to any norm $\|\cdot\|$ for random vectors $X$ with independent coordinates having log-concave tails (Corollary \ref{cor:weak-strong}). In view of Theorem \ref{thm:main} and Remark \ref{rem:tails-reg}, it is enough to show Theorem \ref{prop:IC-moments}.  

Our proof of Theorem \ref{prop:IC-moments} comprises three steps: first we exploit  $\alpha$-regularity of moments of $X$ to control the size of its cumulant-generating function $\Lambda_X$, second we bound the infimum convolution of the optimal cost function with the convex test function being the norm $\|\cdot\|$ properly rescaled, and finally by the property convex $\IC$ we obtain exponential tail bounds which integrated out give the desired moment inequality.


We start with two lemmas corresponding to the first two steps described above and then we put everything together.

\begin{lemma}\label{lm:a-reg}
	Let $p \geq 2$ and suppose that the moments of a random vector $X$ in $\RR^n$ grow $\alpha$-regularly. If for a vector $u\in \RR^n$ we have $\|\langle u,X \rangle\|_p \leq 1$, then
	\[\Lambda_X((2e\alpha)^{-1}pu) \leq p.\]
\end{lemma}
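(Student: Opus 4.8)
\textbf{Plan of proof for Lemma~\ref{lm:a-reg}.}
The plan is to expand the cumulant-generating function $\Lambda_X$ of the one-dimensional random variable $Y := \langle u, X\rangle$ into its Taylor series and estimate each coefficient using the hypothesis $\|Y\|_p \le 1$ together with the $\alpha$-regularity of moments. Write $t := (2e\alpha)^{-1}p$; I want to show $\ln \EE e^{tY} \le p$. First I would bound the moments of $Y$: for an integer $k \ge p$, $\alpha$-regularity (applied with the pair $(k,p)$, noting $k \ge p \ge 2$) gives $\|Y\|_k \le \alpha \frac{k}{p}\|Y\|_p \le \alpha \frac{k}{p}$, while for $2 \le k \le p$ we simply have $\|Y\|_k \le \|Y\|_p \le 1 \le \alpha\frac{k}{p}\cdot\frac{p}{k}$, so in all cases $\|Y\|_k \le \max\{1, \alpha k/p\}$; it will be cleanest to use the single bound $\|Y\|_k^k \le (\alpha k /p)^k$ valid once $k \ge p/\alpha$, and handle small $k$ separately, or — since $\alpha \ge 1$ — to just record $\|Y\|_k \le \alpha k/p$ for all $k\ge p$ and $\|Y\|_k\le 1$ for $k \le p$.

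Next I would write $\Lambda_X(tu) = \ln \EE e^{tY} \le \EE e^{tY} - 1 + (\text{something})$ — more carefully, since I cannot assume $Y$ symmetric here, I would use $\ln\EE e^{tY} \le \EE e^{t|Y|} - 1 = \sum_{k\ge 1} \frac{t^k \EE|Y|^k}{k!} = \sum_{k\ge 1}\frac{t^k\|Y\|_k^k}{k!}$, wait — that first inequality $\ln(1+s)\le s$ needs $\EE e^{tY} \le \EE e^{t|Y|}$, which is fine, but I actually want to keep the $\ln$. The right move: $\ln \EE e^{tY} \le \ln\EE e^{t|Y|} \le \EE e^{t|Y|} - 1 = \sum_{k=1}^\infty \frac{t^k \|Y\|_k^k}{k!}$. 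Now split this sum at $k = \lceil p\rceil$. For $k \le p$ use $\|Y\|_k \le 1$, so $\sum_{k\le p} \frac{t^k}{k!} \le e^t - 1 \le e^{p/(2e\alpha)} - 1 \le e^{p/(2e)}$, and since $p \ge 2$ this is at most... I would bound it by a small constant multiple of $p$. For $k > p$ use $\|Y\|_k^k \le (\alpha k/p)^k$ and $k! \ge (k/e)^k$, giving $\frac{t^k\|Y\|_k^k}{k!} \le \bigl(\frac{te\alpha}{p}\bigr)^k = \bigl(\frac{e\alpha p}{2e\alpha p}\bigr)^k = 2^{-k}$, so $\sum_{k>p} 2^{-k} \le 2^{-p} \le 1$.

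Adding the two pieces, $\Lambda_X(tu) \le e^{p/(2e)} + 1$, and one checks $e^{p/(2e)} + 1 \le p$ for all $p \ge 2$ (at $p=2$ the left side is $e^{1/e}+1 \approx 2.44 > 2$, so the crude splitting point needs a slight adjustment — e.g. split at $k$ with $k \le p/\alpha$ versus $k > p/\alpha$, or absorb the constant differently); the cleanest fix is to note $t = p/(2e\alpha) \le p/(2e)$ so $\sum_{1\le k\le p}\frac{t^k}{k!}$ is actually dominated more sharply, or to use $\|Y\|_k\le \alpha k/p$ uniformly (legal since $\alpha\ge1$ makes $\alpha k/p\ge k/p$, and for $k\le p$ this is $\le \alpha$, not $\le1$ — so that doesn't immediately help either). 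I expect the main obstacle to be precisely this constant-chasing at the low end: getting the geometric tail and the $e^t$-type head to add up to at most $p$ rather than $Cp$. The resolution is to choose the splitting index as $m := \lfloor p/\alpha \rfloor$ (using $\|Y\|_k \le 1$ for $k \le m$ via $k \le m \le p$ — wait, need $k\le p$, and $m \le p$ since $\alpha\ge1$, good) is not quite enough; instead split at $m$ with $\|Y\|_k \le 1$ for $k\le p$ giving head $\le e^{p/(2e\alpha)}-1$, and for the tail $k > p$ note $\frac{te\alpha}{p} = \frac12$, tail $\le 2^{-p}$. Since the constant $2e$ in the statement was presumably chosen to make exactly this work, I would simply verify numerically that $e^{p/(2e\alpha)} - 1 + 2^{-p} \le p$ for $p \ge 2$, $\alpha \ge 1$: the left side is largest at $\alpha=1$, $p=2$, where it is $e^{1/e} - 1 + 1/4 \approx 1.69 < 2$. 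So the inequality holds, and the proof is complete.
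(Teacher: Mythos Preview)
Your overall strategy — expand $\EE e^{t|Y|}$ in a Taylor series, split at $k\approx p$, use $\|Y\|_k\le 1$ for small $k$ and $\alpha$-regularity plus Stirling for large $k$ — is exactly the paper's approach. The gap is a single misstep: you apply $\ln(1+s)\le s$ at the outset, replacing $\Lambda_X(tu)=\ln \EE e^{t|Y|}$ by $\EE e^{t|Y|}-1$. That discards the logarithm, and your head term then behaves like $e^{t}-1=e^{p/(2e\alpha)}-1$, which is \emph{exponential} in $p$. Your final claim that $e^{p/(2e\alpha)}-1+2^{-p}$ is largest at $(\alpha,p)=(1,2)$ is simply false: for $\alpha=1$ and, say, $p=20$ one gets $e^{20/(2e)}-1\approx 38.6>20$, so the asserted inequality $e^{p/(2e\alpha)}-1+2^{-p}\le p$ fails for all sufficiently large $p$.

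The fix is to keep the logarithm, which is precisely what the paper does: it bounds the head and tail and then recognises the whole thing as at most $\sum_{k\ge 0}(2e\alpha p\|\langle u,X\rangle\|_p)^k/k!$, so that taking $\ln$ gives the \emph{linear} bound $2e\alpha p\|\langle u,X\rangle\|_p$; replacing $u$ by $(2e\alpha)^{-1}u$ then yields $\le p$. In fact your own estimates already give enough to finish this way: from your bounds, $\EE e^{t|Y|}\le 1+(e^{p/(2e\alpha)}-1)+\sum_{k>p}2^{-k}\le e^{p/(2e)}+1\le 2e^{p/(2e)}$, and hence $\Lambda_X(tu)\le \ln 2 + p/(2e)\le p$ for $p\ge 2$. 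So the only missing idea is: do not throw away the $\ln$.
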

\begin{proof}
Let $k_0$ be the smallest integer larger than $p$. If  $\alpha e \|\langle u, X\rangle \|_p \le 1/2$, then by $\alpha$-regularity we have
	\begin{align*}
		\Lambda_X(pu) 
		&\le \ln \Bigl(\sum_{k\ge 0} \frac{\EE |\langle pu, X \rangle |^k}{k!}  \Bigr)
		\le \ln \Bigl( \sum_{0\le k\le p} p^k\frac{\|\langle u, X\rangle \|_p^{k}}{k!}  + \sum_{k > p}  (\alpha  k )^k \frac{\|\langle u, X\rangle \|_p^{k}}{k!} \Bigr) \\
		&\leq \ln \Bigl( \sum_{0\le k\le p} \frac{p^k\|\langle u, X\rangle \|_p^{k}}{k!}  + \sum_{k > p}  \bigl(\alpha  e\|\langle u, X\rangle \|_p\bigr)^{k} \Bigr)\\
		&\leq \ln \Bigl( \sum_{0\le k\le p} \frac{p^k\|\langle u, X\rangle \|_p^{k}}{k!}  + 2 (\alpha e \|\langle u, X\rangle \|_p)^{k_0} \Bigr)\\
		&\mathop{\leq} \ln \Bigl( \sum_{0\le k\le p}\frac{ p^k\|\langle u, X\rangle \|_p^{k}}{k!}  +  \frac{(2\alpha e p \|\langle u, X\rangle \|_p )^{k_0}}{k_0!} \Bigr)	 \\	
		&\leq \ln \Bigl( \sum_{0\le k\le k_0}\frac{ (2\alpha ep\|\langle u, X\rangle \|_p)^{k}}{k!}  \Bigr)
		\le 2\alpha e p \|\langle u, X\rangle \|_p \leq p.
	\end{align*}
Replace $u$ with $(2e\alpha)^{-1}u$ to get the assertion.
\end{proof}

\begin{lemma}\label{lm:infconv-lowerbound}
Let  $\|\cdot \|$ be a norm on $\RR^n$ and let $X$ be a random vector with values in $\RR^n$ and moments growing $\alpha$-regularly. For $\beta > 0$, $p \geq 2$, and $x \in \RR^n$ we have
\[
\big(\Lambda_X^*\left(\cdot/\beta\right)\square a\|\cdot\|\big) (x) \geq a\|x\| - p,
\]
where $a = p(2e\alpha\beta\sigma_{\|\cdot\|,X}(p))^{-1}$.
\end{lemma}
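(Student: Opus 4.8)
The plan is to unwind the definition of infimum convolution and reduce the inequality to a pointwise lower bound for $\Lambda_X^*$ that follows from Lemma \ref{lm:a-reg}. Fix $x \in \RR^n$ and an arbitrary splitting $x = y + z$ with $z$ the argument of $a\|\cdot\|$ and $y$ the argument of $\Lambda_X^*(\cdot/\beta)$; I must show $\Lambda_X^*(y/\beta) + a\|z\| \ge a\|x\| - p$. By the triangle inequality $a\|z\| \ge a\|x\| - a\|y\|$, so it suffices to prove
\[
\Lambda_X^*(y/\beta) \ge a\|y\| - p \qquad \text{for every } y \in \RR^n,
\]
which removes $x$ and $z$ from the picture entirely and turns the statement into a one-variable estimate once we dualize.

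Next I would use the Legendre duality: $\Lambda_X^*(y/\beta) = \sup_{v} \{\langle y/\beta, v\rangle - \Lambda_X(v)\}$, so it is enough to exhibit one good choice of $v$. Writing $y = \|y\| \cdot \theta$ with $\|\theta\| = 1$, pick a dual functional $t$ with $\|t\|_* \le 1$ and $\langle t, y\rangle = \|y\|$ (Hahn–Banach), and set $v = \lambda t$ for a scalar $\lambda > 0$ to be chosen. Then $\Lambda_X^*(y/\beta) \ge \lambda \|y\|/\beta - \Lambda_X(\lambda t)$. Since $\|t\|_* \le 1$ we have $\|\langle t, X\rangle\|_p \le \sigma_{\|\cdot\|,X}(p)$, so after normalizing $u := t/\sigma_{\|\cdot\|,X}(p)$ we get $\|\langle u, X\rangle\|_p \le 1$ and Lemma \ref{lm:a-reg} gives $\Lambda_X((2e\alpha)^{-1} p u) \le p$. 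Choosing $\lambda$ so that $\lambda t = (2e\alpha)^{-1} p u$, i.e. $\lambda = p/(2e\alpha\sigma_{\|\cdot\|,X}(p))$, yields
\[
\Lambda_X^*(y/\beta) \ge \frac{\lambda}{\beta}\|y\| - p = \frac{p}{2e\alpha\beta\sigma_{\|\cdot\|,X}(p)}\|y\| - p = a\|y\| - p,
\]
exactly the required bound. Taking the infimum over all decompositions $x = y + z$ finishes the proof.

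There is essentially no serious obstacle here; the only thing to be careful about is the bookkeeping of the scaling constants (matching $\lambda t$ to the vector $(2e\alpha)^{-1}pu$ fed into Lemma \ref{lm:a-reg}) and checking the degenerate cases — if $\sigma_{\|\cdot\|,X}(p) = 0$ then $X = 0$ a.s. and both sides behave trivially, and if $y = 0$ the inequality is just $\Lambda_X^*(0) \ge -p$, which holds since $\Lambda_X^*(0) = -\inf_v \Lambda_X(v) \ge 0$. The key conceptual point is that the triangle inequality lets us discard the norm term and reduce to a pure statement about $\Lambda_X^*$, after which Lemma \ref{lm:a-reg} does all the work.
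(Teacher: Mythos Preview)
Your proof is correct and follows essentially the same approach as the paper: both arguments pick a dual vector $t$ with $\|t\|_*\le 1$ and $\langle t,y\rangle=\|y\|$, normalize by $\sigma_{\|\cdot\|,X}(p)$ to feed into Lemma~\ref{lm:a-reg}, and use the triangle inequality to combine the $\|y\|$ and $\|x-y\|$ terms. The only cosmetic difference is ordering: the paper keeps the full $\inf_y\sup_z$ expression and applies the triangle inequality at the very end, whereas you apply the triangle inequality first to reduce to the pointwise estimate $\Lambda_X^*(y/\beta)\ge a\|y\|-p$ and then invoke the Legendre dual; the content is the same.
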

\begin{proof}
For $f(x) = a\|x\|$ with positive $a$ being arbitrary for now we bound the infimum convolution as follows
\begin{align*}
\big(\Lambda^*_X(\cdot/\beta)\square f\big) (x) &= \inf_y\sup_z \left\{ \beta^{-1}\langle y,z \rangle - \Lambda_X(z) + a \|x-y\| \right\} \\
&= \inf_y\sup_u \left\{ (2e\alpha\beta)^{-1}p\langle y,u \rangle - \Lambda_X((2e\alpha)^{-1}pu) + a \|x-y\| \right\} \\
&\geq \inf_y\sup_{u: \|\langle u,X \rangle\|_p \leq 1} \left\{ (2e\alpha\beta)^{-1}p\langle y,u \rangle - p + a \|x-y\| \right\},
\end{align*}
where in the last inequality we have used Lemma \ref{lm:a-reg}.  Choose $u = \sigma_{\|\cdot\|,X}(p)^{-1} v$ with $\|v\|_* \leq 1$ such that $\langle y,v \rangle = \|y\|$. Then clearly $\|\langle u,X \rangle\|_p \leq 1$ and thus
\[
\Lambda^*_X(\cdot/\beta)\square f (x) \geq \inf_y \left\{ (2e\alpha\beta\sigma_{\|\cdot\|,X}(p))^{-1}p\|y\| - p + a \|x-y\| \right\}.\]
If we now set $a = p(2e\alpha\beta\sigma_{\|\cdot\|,X}(p))^{-1}$, then by the triangle inequality we obtain the desired lower bound
\[
\big(\Lambda_X^*\left(\cdot/\beta\right)\square a\|\cdot\|\big) (x) \geq a\|x\| - p. \qedhere
\] 
\end{proof}

\begin{proof}[Proof of Theorem \ref{prop:IC-moments}]
Let $f(x) = a\|x\|$ with $a = p(2e\alpha\beta\sigma_{\|\cdot\|,X}(p))^{-1}$ as in Lemma \ref{lm:infconv-lowerbound}. Testing the property convex $\IC$ with $f$ and applying Lemma \ref{lm:infconv-lowerbound} yields 
\[
\EE e^{a\|X\|}\EE e^{-a\|X\|} \leq e^{p}.\]
By Jensen's inequality we obtain that both $\EE e^{a(\|X\|-\EE\|X\|)}$ and $\EE e^{a(-\|X\|+\EE\|X\|)}$ are bounded above by $e^p$. Thus Markov's inequality implies the tail bound
\[
\PP\left( a\big|\|X\|-\EE\|X\|\big| > t \right) \leq 2e^{-t}e^{p} \leq 2e^{-t/2}, \quad t \geq 2p.\]
Consequently,
\begin{align*}
a^p\EE\bigl|\|X\|-\EE\|X\|\bigr|^p &= \int_0^\infty pt^{p-1}\PP\left( a\big|\|X\|-\EE\|X\|\big| > t \right) dt \\
&\leq (2p)^p + 2\int_0^\infty pt^{p-1}e^{-t/2} dt = (2p)^p + 2\cdot 2^pp\Gamma(p) \\
&\leq  2(2p)^p.
\end{align*}
Plugging in the value of $a$ gives the result (we can take $C = 4\sqrt{2}e < 16$).
\end{proof}

\section{An example}\label{sec:example}

Let $X$ be a symmetric random variable defined by  $\PP(|X| >t )= T(t)$, where
\begin{equation}\label{eq:def-example}
 T(t) :=\indic{[0,2)} (t) + \sum_{k=1}^{\infty} e^{-2^k} \indic{[2^k, 2^{k+1})}(t), \quad t\geq 0,
\end{equation}
or, in other words, let $|X|$ have the distribution
\begin{equation*}
 (1-e^{-2})\delta_{2}+ \sum_{k=2}^{\infty} \bigl(e^{-2^{k-1}} - e^{-2^k} \bigr) \delta_{2^k} .
\end{equation*}
Let us first show that the moments of $X$ grow $3$-regularly, but $X$ does not satisfy $\IC$ for any $\beta<\infty$ (we also prove a slightly stronger statement later).

Let $Y$ be a symmetric exponential random variable. Then $Y$ has log-concave tails, so the moments of $Y$ grow $1$-regularly (see Remark \ref{rem:tails-reg}). Moreover, if $X$ and $Y$ are constructed in the standard way by the inverses of their CDFs on the probability space $(0,1)$, then
\begin{equation*}
|Y| \leq |X| \leq 2|Y|+2.
\end{equation*}
Therefore, for $p\geq q \geq 2$,
\begin{equation*}
\|X\|_p \leq 2\|Y\|_p + 2 \leq 2\frac{p}{q} \|Y\|_q  +2\leq 3 \frac{p}{q} \|X\|_q
\end{equation*}
(we used the fact that $|X|\geq 2$ in the last inequality). Thus the moments of $X$ grow $3$-regularly.

On the other hand, for every $h>0$ there exists $t>0$ such that
\begin{equation*}
\PP(|X|\geq t+h) = \PP(|X| \geq t).
\end{equation*}
Therefore by \cite[Theorem~1]{fmnw15} there does not exist a constant $C$ such that the pair $(X, \varphi(\cdot/C))$, where $\varphi(x) = \tfrac{1}{2}x^2 \indic{\{|x|\leq 1\}} + ( |x|-1/2)\indic{\{|x|>1\}}  $, satisfies the convex infimum convolution inequality. But, by  symmetry and the $3$-regularity of moments of $X$,
	\begin{align*}
		\Lambda_X(s) 
		&\le \ln \Bigl(1 + \sum_{k\ge 1} \frac{ s^{2k}\EE X^{2k}}{(2k)!}  \Bigr)
		\le \ln \Bigl(1 + \sum_{k\ge 1} \frac{ s^{2k} (3k)^{2k} \bigl(\EE X^{2}\bigr)^k}{(2k)!}  \Bigr) \\
		&\le \ln \Bigl(1 + \sum_{k\ge 1} s^{2k} (3e/2)^{2k} \bigl(\EE X^{2}\bigr)^k   \Bigr) = \ln \Bigl(1 + \sum_{k\ge 1} \bigl( 9e^2s^{2} \EE X^{2}/4\bigr)^k   \Bigr).
	\end{align*}
Thus for some $A, \eps >0$ we have  $\Lambda_X(s) \leq As^2$ for $|s|\leq \eps$ and $2A\eps^2\ge 1$. Hence
	\begin{align*}
		\Lambda_X^*(t) &  \geq \sup_{|s|\leq \eps} \{st - As^2\}   =
		\tfrac{1}{4A} t^2\indic{\{|t|\leq 2A\eps\}}+(\eps |t| - A\eps^2)\indic{\{|t|> 2A\eps\}}
		\\ &
		= 2A\eps^2 \varphi \bigl(t/(2A\eps)\bigr) \ge \varphi \bigl(t/(2A\eps)\bigr) . 
	\end{align*}
				We conclude that $X$ cannot satisfy $\IC$ for any $\beta$.

\begin{remark}
Let us also sketch an alternative approach. Take $a, c >0$,  $b\in\RR$, and denote $\varphi(x) = \min\{ x^2, |x|\}$,  $f(x)  = f_{a,b}(x) = a(x-b)_+$ for $x\in\RR$. One can check that
\begin{equation*}
\bigl( f \infconv \varphi(c\cdot) \bigr) (x) =
\begin{cases}
0 &\text{if } x\leq b,\\
c^2 (x-b)^2 & \text{if } b< x \leq b+1/c,\\
c(x-b) & \text{if } x > b+1/c,
\end{cases}
\end{equation*}
if $a>2c$. It is rather elementary but cumbersome to show that for any $c>0$ there exist $a>0$ and $b\in\RR$ such that~\eqref{eq:def-IC} is violated by the test function $f$. We omit the details.
\end{remark}

In fact, the above example shows that even a slightly stronger statement is  true: for vectors with independent coordinates with $\alpha$-regular growth of moments the comparison of weak and strong moments of norms does not hold with the constant $1$ at the first strong moment. More precisely, let $X_1, X_2, \ldots $ be independent random variables with distribution given by~\eqref{eq:def-example}.  We claim that there does \emph{not} exist any $K<\infty$ such that
\begin{equation}\label{eq:example-comparison}
\bigl(\EE \max_{i\leq n} |X_i|^p \bigr)^{1/p} \leq \EE \max_{i\leq n} |X_i| + K   \sup_{\|t\|_1\leq 1} \Bigl( \EE \bigl|\sum_{i=1}^n t_i X_i\bigr|^p \Bigr)^{1/p}
\end{equation}
holds for every $p\geq 2$ and $n\in\NN$ (note that we chose the $\ell^{\infty}$-norm as our norm). We shall estimate the three expressions appearing in~\eqref{eq:example-comparison}.

We have
\begin{equation} \label{eq:counterex-1}
\sup_{\|t\|_1\leq 1} \Bigl( \EE \bigl|\sum_{i=1}^n t_i X_i\bigr|^p \Bigr)^{1/p} \leq
\sup_{\|t\|_1\leq 1} \sum_{i=1}^n |t_i| \| X_i\|_p = \|X_1\|_p
\end{equation}
(this inequality is in fact an equality).  Since the moments  of $X_1$ grow $3$-regularly, the last term in~\eqref{eq:example-comparison} is bounded by $\widetilde{K}p$ for some $\widetilde{K}<\infty$.

To estimate the remaining two terms we need the following standard fact.

\begin{lemma}
For independent events $A_1, \ldots, A_n$,
\begin{equation*}
(1-e^{-1}) \Bigl( 1 \land \sum_{i=1}^n \PP(A_i) \Bigr) \leq \PP\Bigl(\bigcup_{i=1}^n A_i\Bigr) \leq  1 \land \sum_{i=1}^n \PP(A_i).
\end{equation*}
In particular, for i.i.d. non-negative random variables $Y_1, \ldots, Y_n$, 
\begin{equation*}
(1-e^{-1}) \int_0^{\infty} \Big[1 \land n \PP(Y_1 >t)\Big] dt \leq \EE \max_{i\leq n} Y_i \leq  \int_0^{\infty} \Big[1 \land n \PP(Y_1 >t)\Big] dt.
\end{equation*}
\end{lemma}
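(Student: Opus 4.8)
The plan is to prove this standard lemma about the maximum of independent events by the usual second-moment / inclusion-exclusion argument for the upper bound and a truncation-and-Bernoulli argument for the lower bound. The upper bound is immediate: $\PP(\bigcup A_i)\le \sum \PP(A_i)$ by the union bound, and trivially $\PP(\bigcup A_i)\le 1$, so $\PP(\bigcup A_i)\le 1\wedge\sum\PP(A_i)$. No independence is needed here.

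For the lower bound I would use independence via the complementary events. Write $p_i=\PP(A_i)$ and $s=\sum_{i=1}^n p_i$. By independence,
\[
\PP\Bigl(\bigcup_{i=1}^n A_i\Bigr)=1-\prod_{i=1}^n(1-p_i)\ge 1-\prod_{i=1}^n e^{-p_i}=1-e^{-s}.
\]
So it suffices to check that $1-e^{-s}\ge (1-e^{-1})(1\wedge s)$ for all $s\ge 0$. When $s\ge 1$ the left side is at least $1-e^{-1}=(1-e^{-1})\cdot 1$. When $0\le s\le 1$ we need $1-e^{-s}\ge (1-e^{-1})s$; this follows because the function $s\mapsto 1-e^{-s}$ is concave, so on $[0,1]$ its graph lies above the chord joining $(0,0)$ to $(1,1-e^{-1})$, which is exactly the line $s\mapsto (1-e^{-1})s$. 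That settles the first display.

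For the second display, apply the first part pointwise to the events $A_i=A_i(t):=\{Y_i>t\}$, which are independent (as the $Y_i$ are) and, since the $Y_i$ are identically distributed, all have probability $\PP(Y_1>t)$; thus for each fixed $t>0$,
\[
(1-e^{-1})\bigl(1\wedge n\PP(Y_1>t)\bigr)\le \PP\Bigl(\max_{i\le n}Y_i>t\Bigr)\le 1\wedge n\PP(Y_1>t).
\]
Now integrate in $t$ over $(0,\infty)$ and use the layer-cake formula $\EE\max_{i\le n}Y_i=\int_0^\infty \PP(\max_{i\le n}Y_i>t)\,dt$, valid because the $Y_i$ are non-negative. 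This yields the claimed two-sided bound.

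There is no real obstacle here; the only point requiring a moment's care is the elementary inequality $1-e^{-s}\ge(1-e^{-1})(1\wedge s)$, which as noted reduces to concavity of $1-e^{-s}$ on $[0,1]$ plus monotonicity for $s\ge1$. One should also note in passing that the integrals in the second display may be infinite, but the inequality then holds trivially (if the lower integral diverges so does $\EE\max_i Y_i$), so no integrability hypothesis is needed.
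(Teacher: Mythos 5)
Your proof is correct and follows essentially the same route as the paper: de Morgan plus independence to get $\PP(\bigcup A_i)=1-\prod(1-p_i)\ge 1-e^{-s}$, then the elementary estimate $1-e^{-s}\ge(1-e^{-1})(1\wedge s)$, and finally the layer-cake formula. The only cosmetic difference is that you justify the elementary estimate via concavity on $[0,1]$ and monotonicity for $s\ge 1$, whereas the paper just cites the two inequalities $1-x\le e^{-x}$ and $1-e^{-y}\ge(1-e^{-1})y$ for $y\in[0,1]$.
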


\begin{proof}
The upper bound is just the union bound. The lower bound follows from de Morgan's laws combined with independence and the inequalities $1-x \leq e^{-x}$ and $1-e^{- y} \geq (1-e^{-1})y$ for $x\in\RR$, $y\in[0,1]$.
\end{proof}

Fix $m\geq 2$ and let $e^{2^{m-1}} \leq n < e^{2^m}$. Then
\begin{equation*}
1\land n T(t)  =
\begin{cases}
1 & \text{if } 0< t< 2^m,\\
nT(t) &\text{if } t\geq 2^m.
\end{cases}
\end{equation*}
By the above lemma,
 \begin{align*}
\EE \max_{i\leq n} |X_i|
&\leq  \int_0^{2^m} dt + n \int_{2^m}^{\infty} T(t) dt = 2^m + n \sum_{j=m}^{\infty} e^{-2^j}(2^{j+1} - 2^j)\\
& = 2^m + n \sum_{j=m}^{\infty} e^{-2^j}2^j \leq 2^m + n e^{-2^m} 2^m \sum_{j=0}^{\infty} (2e^{-2^m})^j = 2^m +\frac{n e^{-2^m} 2^m}{1 - 2e^{-2^m}}.
 \end{align*}
 Set $\theta = \theta(m,n) = n e^{-2^m} \in [e^{-2^{m-1}}, 1)$. Then
\begin{equation}\label{eq:counterex-2}
\EE \max_{i\leq n} |X_i| \leq 2^m\Bigl(1 + \frac{\theta}{1 - 2e^{-2^m}} \Bigr).
 \end{equation}
 
 Similarly,
\begin{align*}
\EE \max_{i\leq n} |X_i|^p
&\geq (1-e^{-1}) \int_0^{\infty} 1\land T(t^{1/p}) dt \\
&= (1-e^{-1})\Big[\int_0^{2^{mp}}  dt  + n\int_{2^{mp}}^{\infty} T(t^{1/p})  dt \Bigr]\\
& =  (1-e^{-1})\Big[2^{mp}  +  n \sum_{j=m}^{\infty} e^{-2^j}\bigl(2^{(j+1)p} -2^{jp}\bigr) \Bigr].
 \end{align*}
 Hence
\begin{equation}
\label{eq:counterex-3}
\EE \max_{i\leq n} |X_i|^p >  (1-e^{-1}) n e^{-2^m}\bigl(2^{(m+1)p} -2^{mp}\bigr) = (1-e^{-1}) \theta 2^{mp} (2^p-1).
 \end{equation}
 
 Putting \eqref{eq:counterex-1}, \eqref{eq:counterex-2}, and \eqref{eq:counterex-3} together, we see that \eqref{eq:example-comparison}~would imply
 \begin{equation*}
 (1-e^{-1})^{1/p} \theta^{1/p} 2^{m} (2^p-1)^{1/p} \leq 2^m\Bigl(1 + \frac{\theta}{1 - 2e^{-2^m}} \Bigr) + \widetilde{K}p 
 \end{equation*}
for every $p\geq 2$, $m\geq 2$, and $\theta\in[e^{-2^{m-1}},1)$ of the form $ne^{-2^m}$, $n\in\NN$. Take $p=1/\theta$ and $\theta \sim 1/m$ to get
 \begin{equation*}
 (1-e^{-1})^{\theta} \theta^{\theta} (2^{1/\theta}-1)^{\theta} \leq 1 + \frac{\theta}{1 - 2e^{-2^m}} + \frac{\widetilde{K}}{2^m\theta }.
 \end{equation*}
Since $\theta\to 0$ and $2^m\theta \to \infty$ as $m\to\infty$ this inequality yields $2\leq 1$, which is a~contradiction. Hence inequality~\eqref{eq:example-comparison} cannot hold for all $p\geq 2$ and $n\in\NN$.

\section*{Acknowledgments}	

We thank Rados{\l}aw Adamczak and Rafa{\l} Lata{\l}a for posing questions which led to the results presented in this note.

\bibliographystyle{amsplain}
\bibliography{convinfconv-arxiv}

\end{document}